\title{A new recurrence formula for generic exceptional orthogonal polynomials}
\author{Hiroshi Miki\thanks{Department of Electronics, Faculty of Science and Engineering, Doshisha University,
Kyotanabe City, Kyoto 610 0394, Japan\quad E-mail:hmiki@mail.doshisha.ac.jp}
\and Satoshi Tsujimoto\thanks{Department of Applied Mathematics and Physics, Graduate School of Informatics,
Kyoto University, Sakyo-Ku, Kyoto 606 8501, Japan\quad E-mail:tujimoto@i.kyoto-u.ac.jp}}
\date{}
\newtheorem{thm}{Theorem}[section]
\newtheorem{lmm}[thm]{Lemma}
\theoremstyle{definition}
\theoremstyle{remark}
\newtheorem*{rem}{Remark}
\begin{document}

\maketitle

\newcommand{\setG}{\protect{\mathbb G}}
\newcommand{\setH}{\protect{\mathbb H}}
\newcommand{\setI}{\protect{\mathbb I}}
\newcommand{\setL}{\protect{\mathbb L}}
\newcommand{\setP}{\protect{\mathbb P}}
\newcommand{\setC}{\protect{\mathbb C}}
\newcommand{\setD}{\protect{\mathbb D}}
\newcommand{\setZ}{\protect{\mathbb Z}}
\newcommand{\setN}{\protect{\mathbb N}}
\newcommand{\setR}{\protect{\mathbb R}}
\newcommand{\setS}{\protect{\mathbb S}}
\newcommand{\LL}{{\cal L}}
\newcommand{\DD}{{\cal D}}
\newcommand{\FF}{{\cal F}}
\newcommand{\TT}{{\cal T}}
\newcommand{\BB}{{\cal B}}
\newcommand{\II}{{\cal I}}
\newcommand{\dx}{\partial}
\newcommand{\dxdx}{\partial^2}
\newcommand{\Res}[1]{\underset{#1}{\mbox{{\rm Res}}}}
\newcommand{\gamZero}{\protect{{{\rm \varnothing}}}}
\newcommand{\gamThree}{\protect{{{\rm III}}}}
\newcommand{\gamOne}{\protect{{{\rm I}}}}
\newcommand{\gamTwo}{\protect{{{\rm II}}}}

\begin{abstract}
A new recurrence relation for exceptional orthogonal polynomials is proposed, which holds for type 1, 2 and 3. As concrete examples, the recurrence relations are given for $X_j$-Hermite, Laguerre and Jacobi polynomials in $j=1,2$ case.
\end{abstract}
\vskip 3mm
Keywords: exceptional orthogonal polynomials, Darboux transformation, recurrence relations
\\[3mm]
Mathematics Subject Classification: 33C45, 33C47, 42C05

\section{Introduction}

The exceptional orthogonal polynomials (XOPs) were introduced by G\'omez-Ullate, Kamran and Milson
as a generalization of the classical orthogonal polynomials (COPs), where the polynomials of the first several degree are missed\cite{gomez1}. After their introduction, the XOPs have been widely accepted and have been developed by many researchers.
In particular, Sasaki and Odake have figured out the direct relationship between XOPs and COPs, and then derived many exceptional extensions of COPs belonging to the Askey scheme\cite{os16}.  They have also introduced the multi-indexed orthogonal polynomials by using the repeated Darboux transformations of the XOPs\cite{OS2011multi}.

In this article we will discuss the recurrence relations, one of the fundamental properties, of the XOPs.
The XOPs satisfy the second-order differential (or difference) equation by construction, although they do not hold three-term recurrence relations which ordinary OPs are supposed to. 
Concerning the XOPs derived from the one-time Darboux transformation of the COPs,  to the best of our knowledge, 
the shortest recurrence relations are
$4j+1$-term recurrence relations:
\begin{align*}
 p_j^2(x) Q_n(x)=\sum_{k=n-2j}^{n+2j} \alpha_{n,k} Q_{k}(x),
\end{align*}
where all coefficients $c_{k}^{(n)}$ do not depend on $x$.
Remark that, if the coefficients of recurrence relations allow to depend on $x$,
then there exist 5-term recurrence relations.

The purpose of this article is to present the new recurrence relations of 
the form:
\begin{align*}
 \left(\int p_j(x)dx\right)  Q_n(x) 
 =\sum_{k=n-j-1}^{n+j+1} \beta_{n,k} Q_{k}(x),
\end{align*}
which form $2j+3$-term and hold for any type of XOPs.

This article is organized as follows.
In \S 2, we review how to construct XOPs by using Darboux transformations \cite{adler,crum,darboux}. In \S 3, we show the new recurrence formula for XOPs by observing the structure of the Darboux transformation.
In \S 4, we give the concrete examples of the obtained recurrence relations 
especially for classical XOPs (Hermite, Laguerre and Jacobi).
Finally we give brief summary of this article.

\section{The exceptional orthogonal polynomials}
The exceptional orthogonal polynomials can be characterized by
Darboux transformations among the quasi-polynomial eigenfunctions of the classical B\"ochner type operator\cite{gomez2,SasakiTZ2010}. 
Those are the natural extensions of the COPs characterized by pure 
polynomial eigenfunctions of the classical B\"ochner type operator.
 In this section, we briefly review how to construct the exceptional orthogonal polynomials\cite{gomez2,os16}.

\subsection{Classical orthogonal polynomials}
We first explain COPs shortly before XOPs (see e.g. \cite{Nikiforov} for further details). COPs $\{ P_n(x)\}_{n=0}^{\infty }$ are usually defined as polynomial solutions to the following Sturm-Liouville problem:
\begin{eqnarray}
\label{polyeigen}
(A(x)\partial^2+B(x)\partial +C(x)) P_n
  = \lambda_n P_n,\
\quad \deg\, P_n = n,
\end{eqnarray}
or equivalently
\begin{eqnarray}
 \label{polyeigen_st}
 w(x)^{-1} \left(A(x) w(x) P_n'(x)\right)' = \lambda_n P_n(x),
\end{eqnarray}
where $\partial \equiv \frac{d}{dx}$ and  
\begin{eqnarray}\label{c-weight}
 (A(x)w(x))'=B(x)w(x).
\end{eqnarray}
In order that \eqref{polyeigen} admits the polynomial sequence solution, $A(x),B(x),C(x)$ are supposed to be 
\begin{eqnarray}\label{deg}
\deg (A(x))\le 2,\quad \deg(B(x))=1,\quad \deg(C(x))=0
\end{eqnarray}
and the polynomial solutions are found to be orthogonal polynomials whose orthogonality is described by
\begin{eqnarray}
\int_D P_m(x)P_n(x)w(x)dx=h_n\delta_{mn}\quad (h_n\ne 0),
\end{eqnarray} 
where 
$D \subset \mathbb{R}\cup \{ \pm \infty \}$
 is the corresponding interval s.t. $A(x)w(x)|_{x\in \partial {D}}=0$.
 One of the specific feature of the COPs is that the derivatives of the COPs $\{ P_n'(x)\}_{n=1}^{\infty }$are again COPs whose orthogonality is given by
 \begin{eqnarray}\label{derivative-ops}
 \int_D P_m'(x)P_n'(x)A(x)w(x)dx=\tilde{h}_n\delta_{mn} \quad (\tilde{h}_n \ne 0).
 \end{eqnarray} 
As is well-known, B\"{o}chner classified the all polynomials for \eqref{polyeigen} and showed up to Affine transformation the polynomial eigenfunctions belong to the classical
orthogonal polynomials of Jacobi, Laguerre, Hermite or the Bessel polynomials\cite{bochner}. Throughout this paper, we call the operator $A(x)\partial^2 +B(x)\partial +C(x)$ with \eqref{deg} as B\"{o}chner type operator.

\subsection{Quasi-polynomial eigenfunctions}
As is mentioned before, XOPs can be characterized by Darboux transformations among the quasi-polynomial eigenfunctions of the classical B\"ochner type operator. Hence we shall examine all the possible quasi-polynomial eigenfunctions\cite{Tsuji2013}.

Let $\setL$  be the set of B\"ochner type operators defined by
\begin{eqnarray}
\setL = \left\{\alpha_2 \dxdx+\alpha_1 \dx +\alpha_0\mid
\alpha_1,\alpha_2 \in \setC[x],
0 \le \deg(\alpha_2) \le 2,  \deg (\alpha_1) =1, \alpha_0 \in \setC
\right\}.
\end{eqnarray} 
We consider a special class of eigenfunctions $\phi(x)$ of $\LL \in \setL$ which can be separated into a gauge part $\xi(x)$ and a polynomial part $p(x)$, that is 
\begin{eqnarray*}
 \phi(x)= \xi(x) p(x).
\end{eqnarray*}
In the case of when $\xi'(x)/\xi(x)$ comes to be a rational function, we call $\phi(x)= \xi(x) p(x)$
a {\it quasi-polynomial} eigenfunction.

The gauge part of quasi-polynomial eigenfunctions can be determined
by
\begin{eqnarray}
\label{pr:xi:eta}
{\xi'(x;A,B,\gamma)}/{\xi(x;A,B,\gamma)} = \eta(x,A,B,\gamma),
\end{eqnarray}
where 
\begin{eqnarray}
\label{def:eta}
 \eta(x;A,B,\gamma)=\frac{1}{2\pi i}\oint_{\gamma} \frac{B(z) - A'(z)}{A(z)} \frac{dz}{z-x},
\end{eqnarray}
and $\gamma$ is a positively oriented closed curve in ${\mathbb C}
 \backslash \{\mbox{zeros of $A(z),x$}\}$ which does not enclose the point
 $x\in{\mathbb C}$.
Then it holds that
\begin{eqnarray}
\label{pr:cond:L}
  {\xi^{-1}}\circ \LL\circ  {\xi}\,\in \setL,
\end{eqnarray} 
where $\xi(x;A,B,\gamma)$ is formally given by
 \begin{eqnarray*}
&&
  \xi_{\gamma}(x)
=\xi(x;A,B,\gamma)
=\exp\left({\int \eta(x;A,B,\gamma)dx}\right).
\end{eqnarray*}
Hereafter we employ 
the notation $\xi_{\gamma}(x)=\xi(x;A,B,\gamma)$, $\eta_{\gamma}(x)=\eta(x;A,B,\gamma)$  for simplicity.
For later usage, let us introduce the function $A_{\gamma}(x)$ given by
\begin{eqnarray}
\label{def:A_gamma}
 A_{\gamma}'(x)/A_{\gamma}(x) = -\eta(x;A,0,\gamma).
\end{eqnarray}

After fixing the gauge part of quasi-polynomial eigenfunctions, the polynomial part $p(x)$ of any degree $n$ in $x$
can be determined 
by  solving the eigenvalue problem
\begin{eqnarray*}
  (\xi^{-1}\circ \LL\circ\xi)[p](x) = \lambda \,p(x), 
\end{eqnarray*}
up to the scaling constant, if $n A''/2+ B' + 2(A\xi'/\xi)' \ne 0$ for $n \in \setZ_{\ge 0}$.
Thus we may denote the $n$th degree monic polynomial eigenfunction of 
$\xi_{\gamma}^{-1}\circ\LL\circ\xi_{\gamma} = \LL + 2A\eta_{\gamma}\partial \in \setL $ by
$p_{\gamma,n}(x)$, that is
\begin{eqnarray}\label{bochner:polyseed}
 (\LL + 2A\eta_{\gamma}\partial  )[p_{\gamma,n}(x)]=\lambda_{\gamma,n}p_{\gamma,n}(x).
\end{eqnarray}

Let $\setG$ be the set of representative closed curves which determine
the gauge part $\xi(x)$ of quasi-polynomial eigenfunctions of $\LL$.
Then we introduce the set of quasi-polynomial eigenfunctions of $\LL$ 
by
\begin{eqnarray}
\label{def:Phi}
 \Phi=\{ \phi_{\gamma,n}(x)=\xi_{\gamma}(x) p_{\gamma,n}(x) \mid (\gamma,n) \in \setG \times \setZ_{\ge 0}\}.
\end{eqnarray}
\begin{rem}
In the case when $\gamma=\gamZero$, then $\xi_{\gamZero}$ becomes a constant and therefore $\phi_{\gamZero,n}(x)$ belongs to COPs.
\end{rem}

Since $\deg(A)\le 2$ and $\deg(B)=1$, 
we can find all gauge factors to be classified into at most four kinds depending on the choice of contour $\gamma \in \setG$. Let ${\bold Q}$ be the set of all poles of the integrand
$(B(z)-A'(z))A(z)^{-1}(z-x)^{-1}$ in ${\mathbb C}\cup \{\infty\}$. 
The number of elements of ${\bold Q}$, denoted by $\sharp {\bold Q}$, is
taken from 0 to 3, since the degree of $A$ is less than or equal to 2.
One can find that the integrand is identically zero if $\sharp {\bold
Q}=0$ and also ${\bold Q}=\{x\}$ if $\sharp {\bold Q} = 1$.

For any $\LL \in \setL$  we have two types of curves, $\gamZero$ and $\gamThree$ as follows:
\begin{itemize}
 \item $\gamZero\in \setG$ does not enclose any points in ${\bold Q}$,
 \item $\gamThree\in \setG$ encloses all points but $x$ in ${\bold Q}$.
\end{itemize}
Note that, if $\sharp {\bold Q}$ is equal to  0 or 1, then 
the curves ${{\gamZero}}$ and ${{\gamThree}} $ can be treated as being the same curve.
Moreover, if the set ${\bold Q}\backslash\{x\}$ contains two
elements, say $a_1$ and $a_2$, then
two more additional representative closed curves
$\gamOne$ and $\gamTwo$ can be 
introduced as
\begin{itemize}
 \item $\gamOne\in \setG$ encloses $a_1$, but not $a_2$ or $x$,
 \item $\gamTwo\in \setG$  encloses $a_2$, but not $a_1$ or $x$.
\end{itemize}
We obtain the type 1 and type 2 XOPs 
by taking the quasi-polynomials derived from the curves $\gamOne$ and $\gamTwo$ 
as a seed function of the Darboux transformation discussed in the next subsection. The type 3 XOPs is derived from the quasi-polynomials whose gauge part is given by $\xi_{\gamThree}(x)$\cite{DubovEleonskiiKulagin}.
 
\begin{figure}[htbp]
\vspace*{-1mm}
 \begin{minipage}{0.5\hsize}
  \begin{center}
   \includegraphics[scale=0.8]{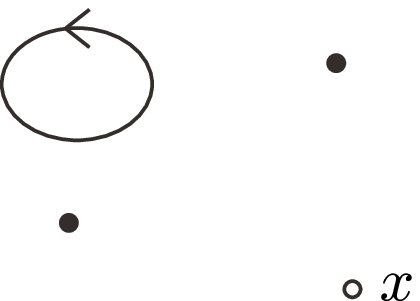}
  \end{center}
\vspace*{-5mm}
  \caption{Case $\sharp {\bold Q}=3$, curve $\gamZero$}
  \label{fig:one}
 \end{minipage}
 \begin{minipage}{0.5\hsize}
  \begin{center}
   \includegraphics[scale=0.8]{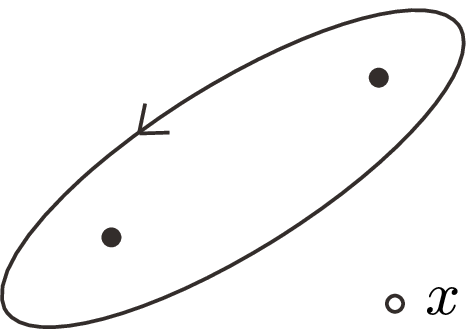}\\
  \end{center}
\vspace*{-5mm}
  \caption{Case $\sharp {\bold Q}=3$, curve $\gamThree$}
  \label{fig:two}
 \end{minipage}
\\[6mm]
 \begin{minipage}{0.5\hsize}
  \begin{center}
   \includegraphics[scale=0.8]{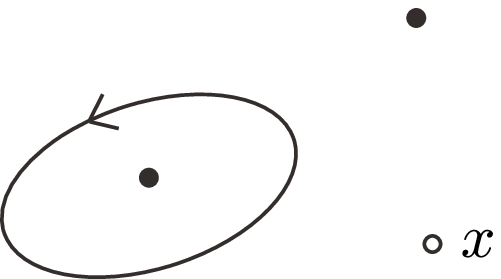}
  \end{center}
\vspace*{-5mm}
  \caption{Case $\sharp {\bold Q}=3$, curve $\gamOne$}
  \label{fig:three}
 \end{minipage}
 \begin{minipage}{0.5\hsize}
  \begin{center}
   \includegraphics[scale=0.8]{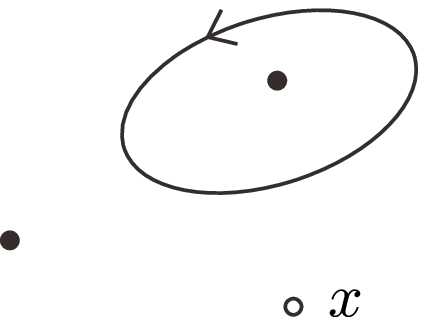}\\
  \end{center}
\vspace*{-5mm}
  \caption{Case $\sharp {\bold Q}=3$, curve $\gamTwo$}
  \label{fig:four}
 \end{minipage}
\end{figure}

\begin{lmm}
\label{lemma1}
Let $\Res{\zeta}=\Res{z=\zeta}\left[(B(z) - A'(z))/(A(z)(z-x))\right]$
be a residue at the point $\zeta\in {\mathbb C}\cup \{\infty\}$. 
For each $\LL\in\setL$, 
corresponding to the choice of the closed curve $\gamma \in \setG$, 
we have the following types of functions $\eta_{\gamma}(x)$:
\begin{itemize}
 \item For any pair of nonzero polynomial $A$ of degree at most 2, and
       linear polynomial $B$,\\
{\rm (i)}
     $\eta_{\gamZero}=0$, 
\qquad  {\rm (ii)}     $\eta_{\gamThree}(x) = -\Res{x}=(A'(x) - B(x))/A(x)$.
\end{itemize}
Only the following two cases admit additional two types of $\eta$:
\begin{itemize}
  \item 
For any pair of polynomials $A, B$ such that 
$A=a_0(x-a_1)(x-a_2)\ne 0$ with $a_1 \ne a_2$, $\deg B=1$, and 
$B-A'$ has no common root with $A$,
\\{\rm (iii)}
$\eta_{\gamOne}= \Res{a_1}$,
\qquad {\rm (iv)}
$\eta_{\gamTwo}= \Res{a_2}$.
 \item 
For any pair of polynomials $A, B$ such that 
$A=a_0(x-a_1)\ne 0$, $\deg B=1$, and 
$B-A'$ has no common root with $A$,
then
\\{\rm (iii)'}
 $\eta_{\gamOne}= \Res{a_1}$,
\qquad
{\rm (iv)'}
$\eta_{\gamTwo}= \Res{\infty}$.
\end{itemize}

\end{lmm}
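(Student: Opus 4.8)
The plan is to evaluate the contour integral \eqref{def:eta} directly by the residue theorem, so that for each admissible curve $\gamma$ the value $\eta_\gamma(x)$ equals the sum of the residues of the integrand
\[
 f(z)=\frac{B(z)-A'(z)}{A(z)\,(z-x)}
\]
at the poles that $\gamma$ encloses. The first step is therefore to locate all poles of $f$ on the Riemann sphere, that is, to pin down the set $\mathbf{Q}$. Since $\deg(B-A')\le 1$ while the denominator carries $A$ together with the simple factor $(z-x)$, the only candidate poles are: the point $z=x$, which is always a simple pole with residue $\Res{x}=(B(x)-A'(x))/A(x)$; the zeros of $A$, which are honest poles exactly when $B-A'$ does not vanish there, i.e.\ precisely under the stated hypothesis that $B-A'$ and $A$ share no root; and the point $z=\infty$. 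A one-line growth estimate settles the behaviour at infinity: when $\deg A=2$ one has $f(z)=O(z^{-2})$, so $\Res{\infty}=0$, whereas when $\deg A=1$ the integrand decays only like a nonzero multiple of $z^{-1}$, so $\infty$ is then a genuine simple pole.

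Once the poles are catalogued, the assertions (i), (iii), (iv), (iii)$'$ and (iv)$'$ are immediate, because in each of these the defining curve encloses exactly one of the listed poles: $\gamZero$ encloses none, so $\eta_{\gamZero}=0$; $\gamOne$ encloses only $a_1$, so $\eta_{\gamOne}=\Res{a_1}$; and $\gamTwo$ encloses only the remaining finite root $a_2$ (respectively the point $\infty$ in the degenerate $\deg A=1$ case), giving $\eta_{\gamTwo}=\Res{a_2}$ (respectively $\eta_{\gamTwo}=\Res{\infty}$). For (ii) I would instead use the global residue relation on the sphere, namely that the residues of $f$ sum to zero,
\[
 \sum_{\zeta\in\mathbf{Q}}\Res{\zeta}=0 .
\]
Since $\gamThree$ encloses every pole of $f$ except $z=x$, the residue theorem then gives
\[
 \eta_{\gamThree}(x)=\sum_{\zeta\in\mathbf{Q},\,\zeta\ne x}\Res{\zeta}=-\Res{x}=\frac{A'(x)-B(x)}{A(x)},
\]
which is the claimed formula. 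A pleasant feature of routing (ii) through the sum-to-zero identity is that it holds uniformly for all $\deg A\le 2$, so one need not separately track whether the residues absorbed by $\gamThree$ sit at finite points or at infinity.

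The one place that demands care is the degenerate case $\deg A=1$, where $\infty$ must be handled as a bona fide pole and the loose correspondence $a_2\leftrightarrow\infty$ made precise. I would do this on the Riemann sphere: the curve $\gamTwo$ that ``encloses $\infty$'' is, in the finite plane, a large circle whose sphere-side interior is the cap around $\infty$; keeping it positively oriented on the sphere forces it to run clockwise in the plane, which is exactly the sign needed so that its contribution equals $\Res{\infty}$ rather than its negative. The residue itself is then computed from $\Res{\infty}f=-\Res{w=0}\bigl[w^{-2}f(1/w)\bigr]$ by a direct Laurent expansion at $w=0$. This orientation bookkeeping is the only subtle point; with it fixed, all six formulas follow from the residue theorem together with the growth estimate at infinity and the no-common-root hypothesis guaranteeing that the zeros of $A$ really are poles.
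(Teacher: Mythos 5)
Your proof is correct. The paper gives no proof of this lemma at all --- it is presented as an immediate consequence of the contour-integral definition \eqref{def:eta} --- and your argument (cataloguing the poles of $(B(z)-A'(z))/(A(z)(z-x))$ on the Riemann sphere, applying the residue theorem curve by curve, and deducing (ii) from the vanishing of the total sum of residues) is exactly the computation the authors leave implicit, with the orientation convention at $\infty$ in the $\deg A=1$ case handled correctly.
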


\begin{rem}
 The cases (iii) and (iv) occur in the B\"ochner type operator of Jacobi polynomials, and also the cases (iii)' and (iv)' in the one of Laguerre polynomials.
\end{rem}

For a given closed curve $\gamma\in \setG$, there exists $\gamma^*\in \setG$  such that
$$\xi_{\gamma^*}(x)=\kappa (w(x)\xi_{\gamma}(x))^{-1}$$ with some nonzero constant $\kappa$.
By virtue of the arbitrary scaling factor of $\xi_{\gamma}$, the nonzero constant $\kappa$ can be set to $\kappa=1$. 
In a similarly manner for $A_{\gamma}$, 
one can obtain
$A_{\gamma^*}(x)=A(x)/A_{\gamma}(x)$.
Here and hereafter we assume that
\begin{eqnarray}
\label{rel:xi_A}
\xi_{\gamZero}(x)=A_{\gamZero}(x)=1, \quad
 w(x)= 1/(\xi_{\gamma}(x)\xi_{\gamma^*}(x)), \quad A(x)=A_{\gamma}(x)A_{\gamma^*}(x)
\end{eqnarray}
for any $\gamma \in \setG $.
One can find that 
$\gamThree$  is the dual of $\gamZero$, and,  if exists, $\gamTwo$ is the one
of $\gamOne$.
We may denote as follows
\begin{eqnarray*}
\gamZero=\gamThree^*,  \gamZero^*=\gamThree \mbox{\quad and \quad} 
\gamOne=\gamTwo^*, \gamOne^*=\gamTwo. 
\end{eqnarray*}
Analogously, we say that
$\xi_{\gamma^*}$ is the dual of $\xi_{\gamma}$.

\subsection{Darboux transformations}
Now we have a  B\"ochner type operator $\LL=A(x)\partial^2 + B(x)\partial  = w(x)^{-1} \partial\,  w(x)  A(x) \, \partial $  and its corresponding set $\Phi$ of polynomial and quasi-polynomial eigenfunctions  such that
\begin{eqnarray}\label{bochner:seed}
 \LL[ \phi_{\gamma,n}(x)] = \lambda_{\gamma,n} \phi_{\gamma,n}\quad \mbox{for all } (\gamma,n) \in \setG \times \setZ_{\ge 0}.
\end{eqnarray}
Here and hereafter we set the constant term $C=0$  on the ground that any B\"ochner type operator can be rewritten into the form (\ref{bochner:seed})
by replacing $\lambda_{\gamma,n}-C \to \lambda_{\gamma,n} $. 
Moreover we assume that spectral parameters $\lambda_{\gamma,n}$ are all mutually distinct for simplicity.

Let us take $\phi_{\rho,j}(x) \in \Phi$  to be the seed function of the Darboux transformation, where we call $(\rho,j)$ the {\it Darboux parameter}.
By using this seed function $\phi_{\rho,j}$, the operator $\LL$ can be factorized into 
\begin{eqnarray*}
  \LL = \BB \circ \FF + \lambda_{\rho,j}
\end{eqnarray*}
where 
\begin{eqnarray*}
&& \BB=A(x)\left(\partial +A'(x)/A(x)- w'(x)/w(x) - \phi_{\rho,j}'(x)/\phi_{\rho,j}(x)\right)\pi(x)^{-1},
\\
&& \FF=\pi(x)\left(\partial - {\phi_{\rho,j}'(x)}/{\phi_{\rho,j}(x)}\right) .
\end{eqnarray*}
Then the Darboux transformed operator and eigenfunctions  are respectively given by
\begin{eqnarray*}
\widehat \LL=\FF \circ\BB +\lambda_{\rho,j}
\end{eqnarray*}
and
\begin{eqnarray*}
\widehat \phi_{\gamma,n}(x)
={\cal T}_{\rho,j}[\phi_{\gamma,n}(x)]
=\left\{\begin{array}{lc}
	\FF[\phi_{\gamma,n}(x)]
=
\dfrac{A_{\rho}(x)}{\xi_{\rho}(x)}
{\rm Wr}[\phi_{\rho,j},\phi_{\gamma,n}](x)
&  \mbox{ if } (\gamma,n) \ne (\rho,j), \\[4mm]
A_{\rho}(x)/(A(x)w(x)\xi_{\rho}(x)) = 
\xi_{\rho^*}(x)/A_{\rho^*}(x)  
& \mbox{ if } (\gamma,n) = (\rho,j),
	      \end{array}
\right.
\end{eqnarray*}
where 
${\rm Wr}[f_1,f_2](x)=f_1(x)f_2'(x)-f_1'(x)f_2(x)$ and   we put the normalization factor $\pi(x)=A_{\rho}(x) \phi_{\rho,j}(x)/ \xi_{\rho}(x)$. 

It is easy to see that 
this transformation can be considered as an isospectral transformation of $\LL$ and $\phi_{\gamma,n}\in\Phi$: 
\begin{eqnarray*}
 \widehat \LL[\widehat \phi_{\gamma,n}(x)] = \lambda_{\gamma,n} \widehat \phi_{\gamma,n}(x)\quad \text{for all $(\gamma,n) \in \setG \times \setZ_{\ge 0}$}.
\end{eqnarray*}

Moreover the Darboux transformation ${\cal T}$ associated with XOPs preserves the quasi-polynomiality of eigenfunctions, that is,
\begin{eqnarray*}
 {\cal T} : \Phi \to \widehat \Phi
=\left\{ \widehat \phi_{\gamma,n}={\cal T}_{\rho,j}[\phi_{\gamma,n}] 
=\widehat \xi_{\gamma,n} \widehat p_{\gamma,n}
\mid (\gamma,n) \in \setG \times \setZ_{\ge 0}\right\},
\end{eqnarray*}
where $\widehat p_{\gamma,n} \in \setC[x]$ and 
\begin{eqnarray*}
\widehat \xi_{\gamma,n}(x)
=
\left\{\begin{array}{ll}
 \xi(x;A,B+{\rm Wr}[A_{\rho},A_{\rho^*}],\gamma)
& (\gamma,n) \ne (\rho,j)\\[1mm]
 \xi(x;A,B+{\rm Wr}[A_{\rho},A_{\rho^*}],\gamma^*)
&  (\gamma,n) = (\rho,j)
       \end{array}
\right..
\end{eqnarray*}

By construction the set of all polynomial eigenfunctions transformed with the Darboux parameter $(\rho,j)$ can be presented by
\begin{align*}
\begin{array}{ll}
  \left\{\widehat \phi_{\gamma,n} \mid (\gamma,n) \in \{\gamZero\} \times \setZ_{\ge 0}\backslash\{(\gamZero,j)\} \right\} \quad
  &  \text{if $\rho=\gamZero$},\\
  \left\{\widehat \phi_{\gamma,n} \mid (\gamma,n) \in \{\gamZero\} \times \setZ_{\ge 0} \right\} \quad
  &  \text{if $\rho=\gamOne$ or  $\gamTwo$},\\
  \left\{\widehat \phi_{\gamma,n} \mid (\gamma,n) \in \{\gamZero\} \times \setZ_{\ge 0} \bigcup \{(\gamThree,j)\} \right\} \quad
  &  \text{if $\rho=\gamThree$}.
\end{array}\end{align*}

\section{Recurrence relation for XOPs}

In the previous section, we have seen that so-called $X_j$-OPs of any type can be obtained via one Darboux transformation.
In the followings, we denote type 1, 2, 3 $X_j$-OPs by $\widehat{P}_{n}^{({\rm I},j)}(x), \widehat{P}_{n}^{({\rm II},j)}(x), \widehat{P}_{n}^{({\rm III},j)}(x)$ respectively. Since every $X_j$-OPs are derived from the ordinary COPs via Darboux transformation, it is straightforward to see that,
in the case of when the Darboux parameter is $(\rho,j)$,
\begin{align*}
\begin{array}{ll}
\widehat{P}_{n}^{(\rho ,j)}(x)=\widehat{\phi }_{\gamZero ,n}
&\text{($\rho= {\rm I, II}$), } \\
\widehat{P}_{0}^{({\rm III} ,j)}(x)= \widehat \phi_{{\rm III},j} \propto 1 \text{\,\,and\,\,}
\widehat{P}_{n+j+1}^{({\rm III} ,j)}(x)=\widehat{\phi }_{\gamZero ,n}
& \text{($\rho= {\rm III}$), }
\end{array}
\end{align*}
for $n \in \setZ_{\ge 0}$ .
Therefore, their explicit form is given as follows:
\begin{itemize}
\item (type 1 and type 2 $X_j$-OPs)
\begin{align}\label{t12-xops}
\begin{split}
&\widehat{P}_{n}^{(\rho,j)}(x)=A_{\rho} p_{\rho,j}\left\{ \partial -\frac{\phi_{\rho,j}'}{\phi_{\rho,j}}\right\} P_n(x)
=A_{\rho} p_{\rho,j}\left\{ \partial -\left( \frac{\xi_{\rho }'}{\xi_{\rho }}+\frac{p_{\rho,j}'}{p_{\rho,j}}\right)\right\} P_n(x),\quad (\rho ={\rm I,II})\\
&\deg (\widehat{P}_{n}^{(\rho,j)}(x))=n+j,
\end{split}
\end{align}
\item (type 3 $X_j$-OPs)
\begin{align}\label{t3-xops}
\begin{split}
&\widehat{P}_{n}^{({\rm III},j)}(x)= 
\begin{cases}
\frac{\xi_{\varnothing }}{A_{\varnothing }}\propto 1\quad (n=0)\\
A_{\rm III} \,p_{{\rm III},j}\left\{ \partial -\left( \frac{\xi_{\rm III }'}{\xi_{\rm III}}+\frac{p_{{\rm III},j}'}{p_{{\rm III},j}}\right)\right\} P_{n-j-1}(x)\quad (n\ge j+1)\\
0\quad (\textrm{otherwise})
\end{cases}\\
&\deg (\widehat{P}_{n}^{({\rm III},j)}(x)) = n\quad (n=0,j+1,j+2,\cdots ).
\end{split}
\end{align}
\end{itemize}
Let  $\setN_{\rho,j} = \{n \in \setZ_{\ge 0} \mid \widehat P_n^{(\rho,j)} \ne 0\}$.
Their orthogonality relation is given by
\begin{eqnarray}\label{orthogonality:xops}
\int_{D} \widehat{P}_n^{(\rho,j)}(x)\widehat{P}_{m}^{(\rho,j)}(x) \frac{A(x)\,w(x)}{(A_{\rho }(x)p_{\rho,j}(x))^2}dx=\widehat{h}_{nm}^{(\rho,j)},
\quad \rho ={\rm I,II,III},
\end{eqnarray}
for $n,m \in \setN_{\rho,j}$.

We shall proceed the bispectrality of the $X_j$-OPs, i.e. the recurrence relations. As for the recurrence relations, several results are already discussed. For example, some classical $X_j$-OPs of type 1 and type 2 are shown to satisfy the following $4j+1$-recurrence relation \cite{SasakiTZ2010}:
\begin{align}
(p_{\rho,j}(x))^2\widehat{P}_{n}^{(\rho,j)}(x)= \sum_{l=-2j}^{2j} \alpha_{n,l} \widehat{P}_{n+l}^{(\rho,j)}(x),\quad (\rho ={\rm I,II}).
\end{align}

We have found there exists another recurrence relation, which might be the simplest recurrence formula for XOPs.
\begin{thm}\label{thm1}
For any $\rho \in \{\gamOne, \gamTwo, \gamThree\}$ and $n \in \setN_{\rho,j}$, 
$X_j$-OPs $\{ \widehat{P}_{n}^{(\rho,j)}\}_{n=0}^{\infty }$ satisfy the following $2j+3$-recurrence relation:
\begin{eqnarray}\label{2j+3rec}
I_j(x) \widehat{P}_{n}^{(\rho,j)}(x) = \sum_{l=-j-1}^{j+1} \beta_{n,l} \widehat{P}_{n+l}^{(\rho,j)}(x),
\end{eqnarray}
with $\widehat{P}_{n< 0}^{(\rho,j)}\equiv 0$ and
\[
I_j(x)=\int p_{\rho,j}(x)dx.
\]
\end{thm}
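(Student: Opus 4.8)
The plan is to reduce the whole statement to the classical three-term recurrence together with the first-order nature of the Darboux operator, and then to localize the exceptional content in a single membership claim. Throughout I write $\widehat P_n^{(\rho,j)}=\FF[P_n]$ with $\FF=\pi(\partial-\phi_{\rho,j}'/\phi_{\rho,j})$ and $\pi=A_\rho p_{\rho,j}$ (for $\rho=\gamOne,\gamTwo$; the case $\rho=\gamThree$ is handled by the same formula under the shifted labelling $\widehat P_{n+j+1}^{(\gamThree,j)}=\widehat\phi_{\gamZero,n}$, $\widehat P_0^{(\gamThree,j)}\propto1$). Since $I_j(x)=\int p_{\rho,j}\,dx$ has degree $j+1$, iterating the classical three-term recurrence of $\{P_n\}$ immediately yields a banded relation $I_j(x)P_n(x)=\sum_{l=-j-1}^{j+1}\gamma_{n,l}P_{n+l}(x)$ with $x$-independent constants $\gamma_{n,l}$ and the convention $P_{<0}\equiv0$.

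Next I would exploit that $\FF$ is first order, so that multiplication by $I_j$ and $\FF$ nearly commute: as operators $[\FF,I_j]=\pi\,I_j'=A_\rho p_{\rho,j}^2$ (a multiplication operator). Hence $I_j\widehat P_n=I_j\FF[P_n]=\FF[I_jP_n]-A_\rho p_{\rho,j}^2P_n$, and applying $\FF$ to the classical band relation turns its first summand into $\sum_l\gamma_{n,l}\widehat P_{n+l}$. Thus $I_j\widehat P_n=\sum_{l=-j-1}^{j+1}\gamma_{n,l}\widehat P_{n+l}-R_n$ with $R_n:=A_\rho p_{\rho,j}^2P_n$, so the entire departure from the classical band is carried by the single term $R_n$, and it suffices to show $R_n$ — equivalently $I_j\widehat P_n$ — lies in the XOP span $\mathcal V:=\FF[\setC[x]]$.

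The crux is therefore to characterize $\mathcal V$. Solving $\FF[p]=q$ by the integrating factor $1/\phi_{\rho,j}$ gives $p=\phi_{\rho,j}\int q\,(A_\rho p_{\rho,j}^2\xi_\rho)^{-1}\,dx$, so $q\in\mathcal V$ precisely when the logarithmic parts of this antiderivative cancel, that is when $\Res{x=x_i}\,q\,(A_\rho p_{\rho,j}^2\xi_\rho)^{-1}=0$ at each simple zero $x_i$ of $p_{\rho,j}$ for $i=1,\dots,j$; these are exactly the $j$ linear conditions cutting out the codimension-$j$ space $\mathcal V$. For $q=R_n$ these residues collapse to $\Res{x=x_i}\,P_n/\xi_\rho=0$, which vanish because $P_n/\xi_\rho$ is holomorphic at $x_i$ (generically $\xi_\rho(x_i)\neq0,\infty$, since the singularities of $\xi_\rho$ sit only at the zeros of $A$). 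Consequently $R_n\in\mathcal V$, and since $\FF[I_jP_n]\in\mathcal V$ as well, we conclude $I_j\widehat P_n\in\mathcal V$.

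It remains to pin down the band. Degree counting gives $\deg(I_j\widehat P_n)=n+2j+1=\deg\widehat P_{n+j+1}^{(\rho,j)}$, so in any expansion $I_j\widehat P_n=\sum_k\beta_{n,k}\widehat P_k$ the top index obeys $k\le n+j+1$. For the lower end I would use the orthogonality \eqref{orthogonality:xops}: from $\beta_{n,k}\widehat h_k=\int(I_j\widehat P_n)\widehat P_k\widehat w\,dx=\int\widehat P_n(I_j\widehat P_k)\widehat w\,dx$ and the top-index bound applied to $I_j\widehat P_k$ (valid for every index, by the same argument), one gets $\beta_{n,k}\widehat h_k=\beta_{k,n}\widehat h_n$ when $n\le k+j+1$ and $0$ otherwise, forcing $\beta_{n,k}=0$ for $k<n-j-1$; the coefficients are $x$-independent by linear independence of the $\widehat P_k$. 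I expect the main obstacle to be the residue characterization of $\mathcal V$ itself: one must confirm that no additional conditions are hidden at the zeros of $A$, where $\xi_\rho$ is singular, nor at infinity, so that exactly the $j$ residues at the zeros of $p_{\rho,j}$ survive — this is where the genericity hypothesis and the precise structure of $\xi_\rho$ and $A_\rho$ enter. I would close by re-running the computation for type~III and by checking the edge indices near the spectral gap via the convention $\widehat P_{<0}^{(\rho,j)}\equiv0$.
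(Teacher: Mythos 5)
Your architecture is genuinely different from the paper's. The paper expands $I_j\widehat P_n^{(\rho,j)}$ in the orthogonal system, computes each coefficient $\varepsilon_{n,l}$ from the orthogonality relation \eqref{orthogonality:xops}, and kills the off-band ones by integration by parts combined with the eigenvalue equation for $\phi_{\rho,j}$ and the derivative-orthogonality \eqref{derivative-ops}. You instead work algebraically: the commutator identity $[\FF,I_j]=\pi I_j'=A_\rho p_{\rho,j}^2$ is correct and gives $I_j\widehat P_n=\FF[I_jP_n]-R_n$ with the first term already in the band, and your duality argument for the lower band edge ($\beta_{n,k}\widehat h_k=\int \widehat P_n\,(I_j\widehat P_k)\,\widehat w\,dx$ plus the top-edge bound applied at index $k$) is sound and arguably cleaner than the paper's direct computation. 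So the skeleton is a legitimate alternative route.

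There are, however, two genuine gaps. First, the crux of your argument is the membership $R_n=A_\rho p_{\rho,j}^2P_n\in\FF[\setC[x]]$, and this rests on a characterization of $\FF[\setC[x]]$ that you assert but do not prove: you verify only that the $j$ residues at the zeros of $p_{\rho,j}$ vanish for $q=R_n$, while the sufficiency of those $j$ conditions --- no further obstructions at the zeros of $A$ (where $\xi_\rho$ has non-integer-power singularities), none at infinity, and none from the free constant of integration, which contributes the non-polynomial term $C\,\xi_\rho p_{\rho,j}$ to the candidate preimage --- is exactly where the content lies, and you explicitly leave it open. A concrete way to close it without residues: substituting $p=p_{\rho,j}\,r$ reduces $\FF[p]=R_n$ to the first-order equation $r'-\eta_\rho r=P_n$, equivalently $A_\rho r'-(A_\rho\eta_\rho)r=A_\rho P_n$ with polynomial data, a triangular finite-dimensional linear system whose solvability is a checkable genericity condition; as it stands, though, the step is missing. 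Second, type III is not actually handled. The extra eigenfunction $\widehat P_0^{({\rm III},j)}\propto 1$ does not lie in $\FF[\setC[x]]$, so the commutator decomposition says nothing about $I_j\widehat P_0^{({\rm III},j)}=I_j$ itself; you need precisely the paper's Lemma \ref{lemma2}, $I_j=\alpha\widehat P_{j+1}^{({\rm III},j)}+\beta\widehat P_0^{({\rm III},j)}$ (proved there from the Sturm--Liouville form \eqref{polyeigen_st} applied to the seed function), both to start the recurrence at $n=0$ and to anchor your duality argument controlling the coefficient of $\widehat P_0^{({\rm III},j)}$ in $I_j\widehat P_n^{({\rm III},j)}$ for $n\ge j+1$; correspondingly, for type III the target span must be enlarged to $\FF[\setC[x]]\oplus\setC\cdot 1$. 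Until these two points are supplied the proof is incomplete.
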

\begin{rem}
In case $j=1$, this gives the same recurrence relation as in \cite{SasakiTZ2010}. In \cite{gomezgm,Odake2013}, $X_j$-OPs are shown to satisfy $5$-term recurrence relation whose coefficients take polynomial forms in $x$, while all the coefficients $\beta_{n,l}$ in \eqref{2j+3rec} are constants. 
\end{rem}
Before the proof, we introduce the following lemma concerning type 3 XOPs.
\begin{lmm}\label{lemma2}
There exist some constants $\alpha, \beta $ s.t. 
\[
I_j(x)= \alpha \widehat{P}_{j+1}^{({\rm III},j)}(x) +\beta \widehat{P}_{0}^{({\rm III},j)}(x).
\]
\end{lmm}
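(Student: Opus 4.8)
The plan is to specialise the explicit type 3 formula \eqref{t3-xops} at the index $n=j+1$ and then integrate the resulting identity directly. First I would observe that $\widehat{P}_{j+1}^{({\rm III},j)}$ is obtained by applying the Darboux operator to $P_0(x)$, which is a (nonzero) constant, so the $\partial$-term in \eqref{t3-xops} annihilates it and only the zeroth-order part survives:
\[
\widehat{P}_{j+1}^{({\rm III},j)}(x)= -A_{\rm III}\,p_{{\rm III},j}\,P_0\left(\frac{\xi_{\rm III}'}{\xi_{\rm III}}+\frac{p_{{\rm III},j}'}{p_{{\rm III},j}}\right).
\]
Using the duality $\gamThree=\gamZero^*$ together with \eqref{rel:xi_A}, which gives $A_{\rm III}=A/A_{\gamZero}=A$, and the type 3 value $\xi_{\rm III}'/\xi_{\rm III}=\eta_{\rm III}=(A'-B)/A$ recorded in Lemma \ref{lemma1}, this collapses to the clean polynomial expression
\[
\widehat{P}_{j+1}^{({\rm III},j)}(x)= -P_0\bigl((A\,p_{{\rm III},j})'-B\,p_{{\rm III},j}\bigr).
\]

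The key step is then to differentiate this identity once and eliminate the second derivative of $p_{{\rm III},j}$ using its defining eigenvalue equation \eqref{bochner:polyseed}. Since $C=0$ and $2A\eta_{\rm III}=2(A'-B)$, that equation reads $A\,p_{{\rm III},j}''+(2A'-B)\,p_{{\rm III},j}'=\lambda_{{\rm III},j}\,p_{{\rm III},j}$. Substituting this into $\tfrac{d}{dx}\widehat{P}_{j+1}^{({\rm III},j)}$, I expect every term carrying $p_{{\rm III},j}'$ to cancel identically, leaving only a multiple of $p_{{\rm III},j}$ itself:
\[
\frac{d}{dx}\widehat{P}_{j+1}^{({\rm III},j)}(x)= -P_0\,(A''-B'+\lambda_{{\rm III},j})\,p_{{\rm III},j}(x)=:\mu\,p_{{\rm III},j}(x).
\]
Here $\mu$ is a genuine constant precisely because the B\"ochner degree constraints \eqref{deg} force $A''$ and $B'$ to be constants, and $\lambda_{{\rm III},j}$ is a scalar.

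Finally, since $I_j'(x)=p_{{\rm III},j}(x)$ by the definition $I_j=\int p_{{\rm III},j}\,dx$, the previous display says $\tfrac{d}{dx}(\widehat{P}_{j+1}^{({\rm III},j)}-\mu I_j)=0$, so $\widehat{P}_{j+1}^{({\rm III},j)}-\mu I_j$ is a constant. Rearranging and absorbing that constant into $\widehat{P}_0^{({\rm III},j)}=\xi_{\gamZero}/A_{\gamZero}$, which is a nonzero constant, yields the claimed relation with $\alpha=1/\mu$ and a suitable $\beta$. The only point needing care is that $\alpha=1/\mu$ be well defined, i.e.\ $\mu\neq0$; this follows from \eqref{t3-xops}, which asserts $\deg\widehat{P}_{j+1}^{({\rm III},j)}=j+1\ge1$, so its derivative cannot vanish and $\mu$ must be nonzero.

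The main obstacle I anticipate is essentially bookkeeping: verifying that the first-order terms cancel after inserting the eigenvalue equation, and confirming that the surviving coefficient is a constant rather than an $x$-dependent function. Both hinge on $C=0$ and on the degree bounds \eqref{deg}; were $\deg A$ larger, $A''$ would no longer be constant and the argument, and indeed the statement itself, would break down.
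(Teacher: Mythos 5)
Your argument is correct and follows essentially the same route as the paper: both proofs reduce the claim to showing that $\bigl(\widehat{P}_{j+1}^{({\rm III},j)}\bigr)'$ is a nonzero constant multiple of $p_{{\rm III},j}(x)=I_j'(x)$ and then integrate, absorbing the integration constant into $\widehat{P}_{0}^{({\rm III},j)}\propto 1$. The only difference is in the verification of that proportionality: you expand and substitute the gauge-transformed eigenvalue equation for the polynomial part $p_{{\rm III},j}$ by hand (whence your constant $A''-B'+\lambda_{{\rm III},j}$, which matches the paper's $\lambda_{{\rm III},j}$ once one accounts for the constant shift between the eigenvalues in \eqref{bochner:seed} and \eqref{bochner:polyseed}), whereas the paper gets the cancellation for free from the self-adjoint Sturm--Liouville form $(Aw\phi')'=\lambda w\phi$ applied to the seed function $\phi_{{\rm III},j}$ together with $w\xi_{\rm III}=1$.
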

\begin{proof}
Since $\widehat P_{0}^{({\rm III},j)}(x)\propto 1$, it is sufficient to prove
\[
I_j'(x)=p_{{\rm III},j}(x)\propto (\widehat P_{j+1}^{({\rm III},j)}(x))'.
\] 
From the definition \eqref{t3-xops} and $w(x) \xi_{\rm III}(x) =1$, we have
\begin{align*}
(\widehat P_{j+1}^{({\rm III},j)}(x))'
&=\frac{d}{dx}\left( A_{\rm III}(x) p_{{\rm III},j}(x)\left\{ \frac{d}{dx} -\left( \frac{\xi_{\rm III}'(x)}{\xi_{\rm III}(x)}+\frac{p_{{\rm III},j}'(x)}{p_{{\rm III},j}(x)}\right)\right\} P_{0}(x)\right)\\
&\propto \frac{d}{dx}\left(  A_{\rm III}(x) p_{{\rm III},j}(x) \frac{\phi_{{\rm III},j}'(x)}{\phi_{{\rm III},j}(x)} \right) \\
&= \frac{d}{dx}\left( A_{\rm III}(x) w(x)\phi_{{\rm III},j}'(x)\right).
\end{align*}
Recall from lemma \ref{lemma1} $A_{\rm III}(x)=A(x)$ and the equivalent form of the B\"{o}chner type operator \eqref{polyeigen_st}, we see as a consequence
\begin{align*}
(\widehat P_{j+1}^{({\rm III},j)}(x))'\propto \lambda _{{\rm III},j}w(x)\phi _{{\rm III},j}(x)=\lambda _{{\rm III},j}p_{{\rm III},j}(x). 
\end{align*}
This completes the proof.
\end{proof}
\begin{proof}[Proof of Theorem \ref{thm1}]
Due to the completeness of the XOPs, it is possible to write
\[
I_j(x) \widehat{P}_{n}^{(\rho,j)}(x)=\sum_{l=0}^{\infty } \varepsilon_{n,l} \widehat{P}_{l}^{(\rho,j)}(x)
\]
uniquely. From the above expansion, we only have to prove
\[
\varepsilon_{n,l}=0,\quad (|l-n|>j+1).
\]
Using the orthogonality relation \eqref{orthogonality:xops}, one can find 
\[
\varepsilon_{n,l}=(\widehat{h}_l^{(\rho,j)})^{-1} \int_{D} I_j(x)\widehat{P}_{n}^{(\rho,j)}(x)\widehat{P}_{l}^{(\rho,j)}(x) \frac{A(x)w(x)}{(A_{\rho }(x)p_{\rho,j}(x))^2}dx.
\]
In the followings, we shall examine the quantity $\varepsilon_l$ for each case.
\begin{itemize}
\item ($\rho ={\rm I,II}$) From the definition \eqref{t12-xops}, we have
\begin{align*}
\widehat{h}_l^{(\rho,j)}\varepsilon_{n,l}
&=\int_{D} I_j\left( P_n'-\frac{\phi_{\rho,j}'}{\phi_{\rho,j}}P_n\right) \left( P_l'-\frac{\phi_{\rho,j}'}{\phi_{\rho,j}}P_l\right) Awdx\\
&=\int_{D} I_jP_n'P_l'Aw-\int_{D} \partial (P_lP_n) \frac{\phi_{\rho,j}'}{\phi_{\rho,j} }I_jAw dx+\int_{D} P_lP_n \left( \frac{\phi_{\rho,j}'}{\phi_{\rho,j} }\right)^2 I_jAwdx.
\end{align*}
From the property of COPs \eqref{derivative-ops}, recalling $I_j$ is the polynomial in $x$ of $j+1$ degree, it is easy to see
\[
\int_{D} I_jP_n'P_l'Awdx=0\quad (|n-l|>j+1).
\] 
Then for $|l-n|>j+1$, we have performing the partial integral
\begin{align*}
\widehat{h}_l^{(\rho,j)}\varepsilon_{n,l}
&=-\int_{D} \partial (P_lP_n) \frac{\phi_{\rho,j}'}{\phi_{\rho,j} }I_jAw dx+\int_{D} P_lP_n \left( \frac{\phi_{\rho,j}'}{\phi_{\rho,j} }\right)^2 I_jAwdx\\
&=\int_{D} P_lP_n \partial \left( \frac{\phi_{\rho,j}'}{\phi_{\rho,j} }I_jAw \right)dx+ \int_{D} P_lP_n \left( \frac{\phi_{\rho,j}'}{\phi_{\rho,j} }\right)^2 I_jAwdx\\
&=\int_{D} P_lP_n\left( \frac{\phi_{\rho,j}''}{\phi_{\rho,j} }I_jAw +  \frac{\phi_{\rho,j}'}{\phi_{\rho,j} }p_{\rho,j}Aw+ \frac{\phi_{\rho,j}'}{\phi_{\rho,j} }I_jBw\right)dx\\
&=\int_{D} P_lP_nI_j \frac{A\phi_{\rho,j}''+B\phi_{\rho,j}'}{\phi_{\rho,j}}wdx+\int_{D} P_lP_n\left( \frac{\xi_{\rho }'}{\xi_{\rho }}+\frac{p_{\rho,j}'}{p_{\rho,j}}\right) Ap_{\rho,j} wdx.
\end{align*}
From \eqref{bochner:seed}, we have for $|l-n|>j$,
\begin{align*}
\widehat{h}_l^{(\rho,j)} \varepsilon_{n,l}
&=\int_{D} P_lP_n( \lambda_{\rho,j}I_j+Ap_{\rho,j}'+\eta _{\rho }Ap_{\rho,j})wdx.
\end{align*}
From the lemma \ref{lemma1}, one can verify that $A\eta_{\rho } $ is a polynomial in $x$ of degree up to 1.
Then, $ \deg( \lambda_{\rho,j}I_j+Ap_{\rho,j}'+\eta _{\rho }Ap_{\rho,j})=j+1$, which amounts to
\[
\varepsilon_{n,l}=0,\quad |n-l|>j+1.
\]
\item ($\rho ={\rm III}$) When $n=0$, from the lemma \ref{lemma2}, we directly have
\begin{eqnarray}\label{2j+3-n=0}
I_j(x) \widehat{P}_{0}^{({\rm III},j)}(x)= \alpha \widehat{P}_{j+1}^{({\rm III},j)}(x) +\beta \widehat{P}_{0}^{({\rm III},j)}(x),\quad (\exists \alpha,\beta \in \mathbb{C}).
\end{eqnarray}
This is nothing but the $2j+3$-recurrence relation. For $n\ge j+1$, in the fashion similar to the case $\rho ={\rm I,II}$, one can find
\[
\varepsilon_{n,l}=0,\quad |n-l|>j+1 \quad \textrm{for }\quad l\ge j+1.
\]
Furthermore, from \eqref{2j+3-n=0}, it is straightforward to see from the orthogonality relation \eqref{orthogonality:xops}
\begin{align*}
\varepsilon_{n,0}
&=(\widehat{h}_0^{(\rho,j)})^{-1} \int_{D} I_j\widehat{P}_{n}^{(\rho,j)}\widehat{P}_{0}^{(\rho,j)} \frac{Aw}{(A_{\rho }p_{\rho,j})^2}dx\\
&=(\widehat{h}_0^{(\rho,j)})^{-1} \int_{D} \widehat{P}_{n}^{(\rho,j)}(\alpha \widehat{P}_{j+1}^{(\rho,j)}+\beta \widehat{P}_{0}^{(\rho,j)}) \frac{Aw}{(A_{\rho }p_{\rho,j})^2}dx\\
&=0\quad (n> j+1).
\end{align*}
Therefore, for $n\ge 1$, we have as a consequence
\[
\varepsilon_{n,l}=0,\quad |n-l|>j+1.
\]
\end{itemize}
Combining these results, we have the $2j+3$-recurrence relations for every type $X_j$-OPs. This completes the proof.
\end{proof}
It is worth noting that from the proof we can evaluate all the coefficients of the recurrence relation in terms of the orthogonality constant with respect to the ordinary COPs. We give the collection of the explicit expression of the recurrence relation for some classical XOPs in the following section. It should also be remarked if we take $j=0$, which corresponds to the ordinary OPs, the recurrence relation \eqref{2j+3rec} becomes the three term recurrence relation as ordinary OPs satisfy. In that sense, the recurrence relation \eqref{2j+3rec} is a natural generalization of the three term recurrence relation for OPs.

\section{Recurrence formulas for the classical XOPs}


In this section, as is mentioned in the previous section, we give the explicit form of the recurrence relation for the classical $X_j$-OPs including Hermite, Laguerre and Jacobi polynomials. In order to avoid the complexity of the recurrence relation, we especially fix all polynomials (including OPs, XOPs) to monic ones.

The ordinary Hermite, Laguerre and Jacobi polynomials in monic form are defined by the Rodrigues' formula\cite{Koekoek}: 
\begin{eqnarray*}
 &&H_n(x) = \dfrac{(-1)^n  }{2^n} e^{x^2}\left(\dfrac{d}{dx}\right)^n e^{-x^2},\\
 &&L_n^{(a)}(x) = (-1)^n\dfrac{e^{x}}{x^{a}} 
\left(\dfrac{d}{dx}\right)^n \left[\dfrac{x^{n+a}}{e^{x}}\right],\\
 && J_n^{(a,b)}(x)=  \dfrac{(-1)^n}{(n+a+b+1)_n}
(1-x)^{-a}(1+x)^{-b}
\left(\dfrac{d}{dx}\right)^n [(1-x)^{n+a}(1+x)^{n+b}],
\end{eqnarray*}
respectively, where $(a)_n=a(a+1)\cdots (a+n-1)$ is the standard Pochhammer symbol. In the followings, we give the corresponding recurrence formula \eqref{2j+3rec} in each case.

\paragraph{$X_j$-Hermite polynomials}
\begin{enumerate}[1.]
\item There are no type-1 exceptional Hermite polynomials.
\item There are no type-2 exceptional Hermite polynomials.
\item Type-3 $X_j$-Hermite polynomials are introduced as follows ($j=2,4,6,\cdots $):
\[
\widehat{H}_n^{({\rm III},j)}(x)=
\left\{\begin{array}{cl}
1\quad &(n=0)\\ 
-2^{-1} i^{-j}
\left(H_j(ix)(\partial -2x)H_{n-j-1}(x)-\partial (H_j(ix))H_{n-j-1}(x)\right)\quad & (n \ge j+1)\\
0\quad &(\textrm{otherwise})\\ 
       \end{array}\right.
\]
with $i=\sqrt{-1}$ and the polynomial part of the seed function and decoupling factor chosen
\[
\phi_{\gamThree,j} (x)=e^{x^2}H_j(ix),\quad \pi (x)=H_j(ix).
\]
It is easy to see 
\[
\int p_{{\rm III},j}(x)dx=\frac{-i}{j+1}H_{j+1}(ix)+C,
\]
which will give us $2j+3$-recurrence relations:
\begin{itemize}
\item $j=2$: $7$-term recurrence relation $(n\ne 1,2)$

\begin{align*}
iH_3(ix)\,\widehat H_{n}^{\rm (III,2)}(x)=&
  \widehat H_{n+3}^{\rm (III,2)}(x)+\frac{3}{2}\, n\, \widehat H_{n+1}^{\rm (III,2)}(x)
+\frac{3}{4}\, n \,(n-3) \widehat H_{n-1}^{\rm (III,2)}(x)
\\
&+\frac{1}{8}\, n \,(n-4) (n-5) \widehat H_{n-3}^{\rm (III,2)}(x)
\end{align*}
with $iH_3(ix)=x^3+3x/2 $.
\end{itemize}
\end{enumerate}

\paragraph{$X_j$-Laguerre polynomials}

\begin{enumerate}[1.]
\item Type-1 $X_j$-Laguerre polynomials are introduced as follows:
\[
\widehat{L}_{n}^{({\rm I},j)}(x;a)= 
(-1)^{j+1}
\left(L_j^{(a)}(-x)(\partial - 1)L_n^{(a)}(x) - \partial (L_j^{(a)}(-x))L_n^{(a)}(x)\right),
\]
where the polynomial part of the seed function and decoupling factor are chosen
\[
\phi_{\gamOne,j} (x)=e^x L_j^{(a)}(-x),\quad \pi (x)=L_j^{(a)}(-x),
\]
respectively. It is easy to verify
\[
\int p_{{\rm I}, j}(x)dx=-\frac{1}{j+1}L_{j+1}^{(a)}(-x)+C,
\]
which will give us $2j+3$-recurrence relations:
\begin{itemize}
\item $j=1$: $5$-term recurrence relation

\begin{align*}
&
L_2^{(a-1)}(-x) \widehat L_{n}^{\rm (I,1)}(x;a)
= \widehat L_{n+2}^{\rm (I,1)}(x;a)
+4 (n+a+2) \widehat L_{n+1}^{\rm (I,1)}(x;a)
\\&\quad
+2 (n+a+2) (3 n+2 a+2) \widehat L_{n}^{\rm (I,1)}(x;a)
+4 (n+a+2) (n+a) n \widehat L_{n-1}^{\rm (I,1)}(x;a)
\\&\quad
+  (n+a+2) (n+a-1) n (n-1) \widehat L_{n-2}^{\rm (I,1)}(x;a)
\end{align*}
with $L_2^{(a-1)}(-x) =x^2+ (2a+2) x+a(a+1)$,
\item $j=2$: $7$-term recurrence relation 
\begin{align*}
&-L_3^{(a-1)}(-x) \widehat L_{n}^{\rm (I,2)}(x;a)\\
&=\widehat L_{n+3}^{\rm (I,2)}(x;a)
+6 (n+a+3) \widehat L_{n+2}^{\rm (I,2)}(x;a)
+3 (n+a+3) (5 n+4 a+8) \widehat L_{n+1}^{\rm (I,2)}(x;a)
\\&\quad
+4 (n+a+3) (n+a+1)   (5 n+2 a+4)  \widehat L_{n}^{\rm (I,2)}(x;a)
+3 (n+a+3) (n+a  )   (5 n+4 a+3) n \widehat L_{n-1}^{\rm (I,2)}(x;a)
\\&\quad
+6 (n+a+3) (n+a-1)_2 (n-1)_2 \widehat L_{n-2}^{\rm (I,2)}(x;a)
+  (n+a+3) (n+a-2)_2 (n-2)_3 \widehat L_{n-3}^{\rm (I,2)}(x;a)
\end{align*}
with $-L_3^{(a-1)}(-x)=x^3+3(a+2) x^2+3 (a+1) (a+2) x+a (a+1) (a+2) $.
\end{itemize}
\item Type-2 $X_j$-Laguerre polynomials are introduced as follows:
\[
\widehat{L}_{n}^{({\rm II},j)}(x;a)=(n+a-j)^{-1}\left(L_j^{(-a)}(x)(x\partial +a)L_n^{(a)}(x)-x\partial (L_j^{(-a)}(x))L_n^{(a)}(x)\right),
\]
where the polynomial part of the seed function and decoupling factor are chosen
\[
\phi_{\gamTwo,j} (x)=x^{-a} L_j^{(-a)}(x),\quad \pi (x)=xL_j^{(-a)}(x),
\]
respectively. It is easy to verify
\[
\int p_{{\rm II}, j}(x)dx=\frac{1}{j+1}L_{j+1}^{(-a-1)}(x)+C,
\]
which will give us $2j+3$-recurrence relations:
\begin{itemize}
\item $j=1$: $5$-term recurrence relation

\begin{align*}
&
L_2^{(-a-1)}(x)\,\widehat L_{n}^{\rm (II,1)}(x;a)\\
&= \widehat L_{n+2}^{\rm (II,1)}(x;a)
+4 (n+a  ) \widehat L_{n+1}^{\rm (II,1)}(x;a)
+2 (n+a-1) (3 n+2 a+1) \widehat L_{n}^{\rm (II,1)}(x;a)
\\&\quad
+4 (n+a-2) (n+a)\, n\, \widehat L_{n-1}^{\rm (II,1)}(x;a)
+(n+a-3) (n+a) \,n \,(n-1) \widehat L_{n-2}^{\rm (II,1)}(x;a)
\end{align*}
where $L_2^{(-a-1)}(x)=x^2+(2 a-2) x+a(a-1)$,
\item $j=2$: $7$-term recurrence relation 

\begin{align*}
&
L_3^{(-a-1)}(x)\,\widehat L_{n}^{\rm (II,2)}(x;a)\\
&=
\widehat L_{n+3}^{\rm (II,2)}(x;a)
+6 (n+a  ) \widehat L_{n+2}^{\rm (II,2)}(x;a)
+3 (n+a-1) (5 n+4 a+2) \widehat L_{n+1}^{\rm (II,2)}(x;a)
\\&\quad
+4 (n+a-2) (n+a) (5 n+2 a+1) \widehat L_{n}^{\rm (II,2)}(x;a)
+3 (n+a-3) (n+a) (5 n+4 a-3) n \widehat L_{n-1}^{\rm (II,2)}(x;a)
\\&\quad
+6 (n+a-4) (n+a-1)_2 (n-1)_2 \widehat L_{n-2}^{\rm (II,2)}(x;a)
+  (n+a-5) (n+a-1)_2 (n-2)_3 \widehat L_{n-3}^{\rm (II,2)}(x;a)
\end{align*}
where $L_3^{(-a-1)}(x)=x^3+3(a-2) x^2+3 (a-1)(a-2)  x+ a (a-1)(a-2)$.
\end{itemize}
\item Type-3 $X_j$-Laguerre polynomials are introduced as follows:
\[
\widehat{L}_{n}^{({\rm III},j)}(x;a)
=
\left\{\begin{array}{cl}
1 \quad &(n=0)\\
(-1)^{j+1}\left(L_j^{(-a)}(-x)(x\partial +a-x)L_{n-j-1}^{(a)}(x)-x\partial (L_j^{(-a)}(-x))L_{n-j-1}^{(a)}(x) \right) \quad &(n \ge j+1)\\
0 \quad &(\textrm{otherwise})\\
       \end{array}\right.,
\]
where the polynomial part of the seed function and decoupling factor are chosen
\[
\phi_{\gamThree,j} (x)=x^{-a}e^x L_j^{(-a)}(-x),\quad \pi (x)=xL_j^{(-a)}(-x),
\]
respectively. It is easy to verify
\[
\int p_{{\rm III}, j}(x)dx=-\frac{1}{j+1}L_{j+1}^{(-a-1)}(-x)+C,
\]
which will give us $2j+3$-recurrence relations:
\begin{itemize}
\item $j=1$: $5$-term recurrence relation ($n\ne 1$)
\begin{align*}
&
L_2^{(-a-1)}(-x)\,\widehat L_{n}^{\rm (III,1)}(x;a)
\\&
=\widehat L_{n+2}^{\rm (III,1)}(x;a)
+4 n \widehat L_{n+1}^{\rm (III,1)}(x;a)
+2 n (3 n+a-4) \widehat L_{n}^{\rm (III,1)}(x;a)
\\&\quad
+4 n (n-2) (n+a-2) \widehat L_{n-1}^{\rm (III,1)}(x;a)
+  n (n-3) (n+a-2) (n+a-3) \widehat L_{n-2}^{\rm (III,1)}(x;a)
\end{align*}
where $L_2^{(-a-1)}(-x)=x^2+ (-2a+2) x+a(a-1)$,
\item $j=2$: $7$-term recurrence relation ($n\ne 1,2$)
\begin{align*}
&-L_3^{(-a-1)}(-x)\,\widehat L_{n}^{(\rm III,2)}(x;a)\\
&=
\widehat L_{n+3}^{(\rm III,2)}(x;a)
+6n \widehat L_{n+2}^{(\rm III,2)}(x;a)
+3n (5n+a-7) \widehat L_{n+1}^{(\rm III,2)}(x;a)
\\&\quad
+4n (n-2) (5n+3a-11) \widehat L_{n}^{(\rm III,2)}(x;a)
+3n (n-3) (n+a-3) (5n+a-12)\widehat L_{n-1}^{(\rm III,2)}(x;a)
\\&\quad
+6n (n-4)_2 (n+a-4)_2 \widehat L_{n-2}^{(\rm III,2)}(x;a)
+ n (n-5)_2 (n+a-5)_3 \widehat L_{n-3}^{(\rm III,2)}(x;a)
\end{align*}
where $-L_3^{(-a-1)}(-x)=x^3-3(a-2) x^2+3 (a-1) (a-2) x-a(a-1)(a-2)$.
\end{itemize}
\end{enumerate}

\paragraph{$X_j$-Jacobi polynomials}
As for $X_j$-Jacobi polynomials, 
the corresponding recurrence relations take simple form if $a=b$. 
We thus restrict the case to $a=b$.
\begin{enumerate}
\item Type-1 $X_j$-Jacobi polynomials are introduced as follows:
\[
\widehat{J}_{n}^{{\rm (I,1)}}(x;a,b)
=
\frac{J_j^{(a,-b)}(x)((1+x)\partial +b)J_n^{(a,b)}(x)-(1+x)\partial(J_j^{(a,-b)}(x))J_n^{(a,b)}(x)}{n+b-j}
\]
where the polynomial part of the seed function and decoupling factor are chosen
\[
\phi_{\gamOne,j} (x)=(1+x)^{-b} J_j^{(a,-b)}(x),\quad \pi (x)=(1+x)J_j^{(a,-b)}(x),
\]
respectively. It is easy to verify
\[
\int p_{{\rm I}, j}(x)dx=\frac{1}{j+1}J_{j+1}^{(a-1,-b-1)}(x)+C,
\]
which will give us $2j+3$-recurrence relations:
\begin{itemize}
\item $j=1~(a=b)$: 5-term recurrence relation
\begin{align*}
 &
 J_{2}^{(a-1,-a-1)}(x)\, \widehat J_{n}^{({\rm I},1)}(x;a,a)
=\widehat J_{n+2}^{({\rm I},1)}(x;a,a)
+2a\widehat J_{n+1}^{({\rm I},1)}(x;a,a)
\\&\quad
+\frac{2(4a^2-1)(n+a-1)(n+a+2)}{(2n+2a-1)(2n+2a+3)}
   \widehat J_{n}^{({\rm I},1)}(x;a,a)
+\frac{8a((n+a)^2-4)\,n\,(n+2a)}{(2n+2a-2)_2(2n+2a+1)_2}
   \widehat J_{n-1}^{({\rm I},1)}(x;a,a)
\\&\quad
+\frac{4(n+a-3)(n+a+2)(n-1)_2(n+2a-1)_2}{(2n+2a-3)_3(2n+2a-1)_3}
   \widehat J_{n-2}^{({\rm I},1)}(x;a,a)
\end{align*}
where $J_{2}^{(a-1,-a-1)}(x)=x^2+2ax+2a^2-1$,
\item $j=2~(a=b)$: 7-term recurrence relation
\begin{align*}
&
 J_3^{(a-1,-a-1)}(x) \widehat J_{n}^{({\rm I},2)}(x;a,a)
=\widehat J_{n+3}^{({\rm I},2)}(x;a,a)
+\dfrac{3a}{2}\widehat J_{n+2}^{({\rm I},2)}(x;a,a)
\\&\quad
+
{\frac{\left(4\,a^2-1\right)\left(n+a+3\right)\left(n+a-1\right) }
{\left(2\,n+2\,a-1\right)\left(2\,n+2\,a+5\right)}}
\widehat J_{n+1}^{({\rm I},2)}(x;a,a)
\\&\quad
+
{\frac {a\left(4\,a^2-1\right)\left(n+a+3\right)\left(n+a-2 \right)}
        {3\left(2\,n+2\,a-1 \right)\left(2\,n+2\,a+3 \right)}}
\widehat J_{n}^{({\rm I},2)}(x;a,a)
\\&\quad
+
{\frac {\left(4\,a^2-1\right)\left(n+a+3\right)\left(n+a-3 \right)}
        {\left(2\,n+2\,a-1 \right)\left(2\,n+2\,a+2 \right)}}
{\frac { n \left(n+2a\right)}
        {\left(2\,n+2\,a-3 \right)\left(2\,n+2\,a+3 \right)}}
\widehat J_{n-1}^{({\rm I},2)}(x;a,a)
\\&\quad
+
{\frac {6 a \left(n+a+3\right)\left(n+a-4 \right)}
        {\left(2\,n+2\,a-1 \right)^2}}
{\frac { (n-1)_2 \left(n+2a-1\right)_2}
        {\left(2\,n+2\,a-4 \right)_2\left(2\,n+2\,a+1 \right)_2}}
\widehat J_{n-2}^{({\rm I},2)}(x;a,a)
\\&\quad
+
{\frac {4 \left(n+a+3\right)\left(n+a-5 \right)}
        {\left(2\,n+2\,a-1 \right)\left(2\,n+2\,a-3 \right)}}
{\frac { (n-2)_3 \left(n+2a-2\right)_3}
        {\left(2\,n+2\,a-5 \right)_3\left(2\,n+2\,a-1 \right)_3}}
\widehat J_{n-3}^{({\rm I},2)}(x;a,a)
\end{align*}
where $J_3^{(a-1,-a-1)}(x)=
x^3+3ax^2/2+(a-1)(a+1)x+a(2a^2-5)/6$,
\end{itemize}

\item Type-2 $X_j$-Jacobi polynomials are introduced as follows:
\[
\widehat{J}_{n}^{({\rm II},j)}(x;a,b)
=\frac{J_j^{(-a,b)}(x)((1-x)\partial -a)J_n^{(a,b)}(x)-(1-x)\partial(J_j^{(-a,b)}(x))J_n^{(a,b)}(x)}{-n-a+j}
\]
where the polynomial part of the seed function and decoupling factor are chosen
\[
\phi_{\gamTwo,j} (x)=(1-x)^{-a} J_j^{(-a,b)}(x),\quad \pi (x)=(1-x)J_j^{(-a,b)}(x),
\]
respectively.
In this case the recurrence relations  can be immediately derived by observing
\begin{align*}
 \widehat{J}_{n}^{({\rm II},j)}(x;a,a) = (-1)^{n+j}\widehat{J}_{n}^{({\rm I},j)}(-x;a,a) .
\end{align*}
\item Type-3 $X_j$-Jacobi polynomials are introduced as follows:
\[
\widehat{J}_{n}^{({\rm III},j)}(x;a,b)
=
\left\{\begin{array}{cl}
1 &(n=0)\\
\bigl(J_j^{(-a,-b)}(x)\,((x^2-1)\partial +(a+b)x+a-b)J_{n-j-1}^{(a,b)}(x)\\
+(1-x^2)\partial(J_j^{(-a,-b)}(x))J_{n-j-1}^{(a,b)}(x)\bigr)/
(n+a+b-2j-1)
& (n \ge j+1)
\\
0 & (\textrm{otherwise})
\end{array}\right.
\]
where the polynomial part of the seed function and decoupling factor are chosen
\[
\phi_{\gamThree,j} (x)=(1-x)^{-a}(1+x)^{-b} J_j^{(-a,-b)}(x),\quad \pi (x)=(1-x)^2J_j^{(-a,-b)}(x),
\]
respectively. It is easy to verify
\[
\int p_{{\rm III}, j}(x)dx=\frac{1}{j+1}J_{j+1}^{(-a-1,-b-1)}(x)+C,
\]
which will give us $2j+3$-recurrence relations:
\begin{itemize}
\item $j=1~(a=b)$: 5-term recurrence relation ($n\ne 1$)
\begin{align*}
&J_{2}^{(-a-1,-a-1)}(x)\,\widehat{J}_{n}^{({\rm III},1)}(x;a,a)
=\widehat{J}_{n+2}^{({\rm III},1)}(x;a,a)
\\&\quad
+\dfrac{2(2a+1)n(n+2a-3)}{(2a-1)(2n+2a-5)(2n+2a-1)}\widehat{J}_{n}^{({\rm III},1)}(x;a,a)
\\&\quad
+\dfrac{n(n-3)(n+2a-5)(n+2a-2)}{8(2n+2a-5)(n+a-7/2)_3}
\widehat{J}_{n-2}^{({\rm III},1)}(x;a,a)
\end{align*}
where
$J_{2}^{(-a-1,-a-1)}(x)=x^2+1/(2a-1)$,
\item $j=2~(a=b)$: 7-term recurrence relation ($n\ne 1,2$)
\begin{align*}
&J_{3}^{(-a-1,-a-1)}(x)\,\widehat{J}_{n}^{({\rm III},2)}(x;a,a)
=
\widehat{J}_{n+3}^{({\rm III},2)}(x;a,a)
\\&\quad
+\dfrac{3(2a+1)n(n+2a-4)}{(2a-3)(2n+2a-1)(2n+2a-7)}
\widehat{J}_{n+1}^{({\rm III},2)}(x;a,a)
\\&\quad
+\dfrac{3(2a+1)n(n-3)(n+2a-3)(n+2a-6)}{2^4(2a-3)(n+a-9/2)_4}
\widehat{J}_{n-1}^{({\rm III},2)}(x;a,a)
\\&\quad
+\dfrac{n(n-4)(n-5)(n+2a-8)(n+2a-4)_2}{2^6 (n+a-9/2)_2(n+a-11/2)_4}
\widehat{J}_{n-3}^{({\rm III},2)}(x;a,a)
\end{align*}
where
$J_{3}^{(-a-1,-a-1)}(x)= x^3+3x/(2a-3)$.
\end{itemize}
\end{enumerate}

\section{Concluding Remarks}
In this paper, we have given a brief review of XOPs especially focusing on possible quasi-polynomial solutions to the B\"{o}chner type operator and the corresponding Darboux transformations, from which $X_j$-OPs of all kinds are obtained. Observing the structure of the Darboux transformation carefully, we have shown a new recurrence formula for $X_j$-OPs. We have also given the explicit form of the recurrence relation for several classical $X_j$-OPs.

The recurrence formula holds for every type $X_j$-OPs and all the coefficients except spectra of the recurrence formula are constants. This means that $X_j$-OPs diagonalize some band-matrices which can be regarded as the generalized Jacobi matrix, which we believe produces the exactly solvable physical model. Recently, multi-indexed OPs are derived from the multi-step Darboux-transformations and the recurrence formula with constant coefficients for multi-indexed OPs are expected to be obtained by our strategy\cite{OS2011multi,gomez3}. In \cite{Duran}, the higher-order recurrence relation is given from the ``dual'' aspects of OPs with higher order difference equation, which is definitely related to our recurrence formula. We shall plan to return to these in the future publications. 
\section*{Acknowledgements}
The authors thank to A.~Duran, R.~Milson,S.~Odake, R.~Sasaki, L.~Vinet and A.~Zhedanov
for fruitful discussions and helpful advice. 
They also give special thank to D.~G\'omez-Ullate, F.~Marcell\'an and M.~Rodr\'iguez for organizing 
the workshop on ``XOPs and exact solutions in mathematical physics'' at Segovia, Spain.
The research of ST was supported in part by JSPS KAKENHI Grant Numbers 25400110.


\end{document}